\newcommand\CA{{\mathcal A}}
\newcommand\BBC{{\mathbb C}}
\newcommand\BBZ{{\mathbb Z}}
\newcommand {\GAP}{\textsf{GAP}}  
\newcommand {\CHEVIE}{\textsf{CHEVIE}}  
\newcommand {\Singular}{\textsf{SINGULAR}}
\newcommand\Der{{\operatorname{Der}}}
\newcommand\GL{\operatorname{GL}}
\newcommand\pdeg{\operatorname{pdeg}}
\numberwithin{equation}{section}
\theoremstyle{plain}
\newtheorem{lemma}[equation]{Lemma}
\newtheorem{theorem}[equation]{Theorem}
\subjclass[2010]{Primary 20F55; Secondary 52B30, 52C35, 14N20 13N15}
\begin{document}

\title[Reflection arrangements are hereditarily free]
{Reflection arrangements are hereditarily free}

\author[T. Hoge]{Torsten Hoge}
\address
{Fakult\"at f\"ur Mathematik,
Ruhr-Universit\"at Bochum,
D-44780 Bochum, Germany}
\email{torsten.hoge@rub.de}

\author[G. R\"ohrle]{Gerhard R\"ohrle}
\address
{Fakult\"at f\"ur Mathematik,
Ruhr-Universit\"at Bochum,
D-44780 Bochum, Germany}
\email{gerhard.roehrle@rub.de}

\keywords{Complex reflection groups, 
Freeness of restrictions of reflection arrangements}

\allowdisplaybreaks

\begin{abstract}
Suppose that $W$ is a finite, unitary, 
reflection group acting on the complex 
vector space $V$.
Let $\CA = \CA(W)$ be the associated 
hyperplane arrangement of $W$.
Terao has shown that 
each such reflection
arrangement $\CA$ is free.
Let $L(\CA)$ be the intersection lattice of $\CA$.
For a subspace $X$ in $L(\CA)$ we have the restricted
arrangement $\CA^X$ in $X$ by means of restricting hyperplanes from $\CA$
to $X$.
In 1992, Orlik and Terao 
conjectured that each such restriction is again free.
In this note
we settle the outstanding cases 
confirming the conjecture.

In 1992, 
Orlik and Terao also conjectured that every reflection arrangement is 
hereditarily inductively free.
In contrast, this stronger conjecture is false however; 
we give two counterexamples.
\end{abstract}

\maketitle


\section{Introduction}

Suppose that $W$ is a finite, unitary, 
reflection group acting on the complex 
vector space $V$.
Let $\CA = (\CA,V) = (\CA(W),V)$ be the associated 
hyperplane arrangement of $W$.
Terao \cite{terao:freeI} has shown that each reflection
arrangement $\CA$ is free and that 
the multiset of exponents 
$\exp \CA$ of $\CA$ is given by the 
coexponents of $W$
(see also \cite[\S 6]{orlikterao:arrangements}).

Let $L(\CA)$ be the intersection lattice of $\CA$.
For a subspace $X \in L(\CA)$ we consider the restricted
arrangement $\CA^X = (\CA^X, X)$ in $X$ by means of restricting hyperplanes from $\CA$
to $X$.
In 1992, Orlik and Terao \cite[Conj.\ 6.90]{orlikterao:arrangements}
conjectured that each such restriction is again a free arrangement.
Free arrangement with this property are called \emph{hereditarily free},
 \cite[Def.\ 4.140]{orlikterao:arrangements}. 
In this note we settle the outstanding cases 
confirming the conjecture:

\begin{theorem}
\label{main}
For $W$ a finite complex reflection group,  
the reflection arrangement $\CA = \CA(W)$ is 
hereditarily free.
\end{theorem}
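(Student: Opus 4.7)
The plan is to invoke the Shephard--Todd classification of irreducible complex reflection groups and reduce to a case-by-case verification. Since reflection arrangements of direct products decompose as products (and the product of two hereditarily free arrangements is hereditarily free), we may assume $W$ is irreducible. The classification then splits into three families: (i) the real reflection groups (Coxeter groups), (ii) the infinite family of imprimitive groups $G(de,e,n)$, and (iii) the $34$ exceptional groups $G_4,\ldots,G_{37}$. A considerable portion of these cases is already treated in the literature, so the task is to identify and dispose of the outstanding ones.

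For real reflection groups, hereditary freeness is classical: by Steinberg's fixed point theorem the restriction $\CA^X$ is again a reflection arrangement, namely of the pointwise stabilizer $W_X$, so Terao's freeness theorem (cited in the introduction) applies immediately. For the infinite family, one would like to run the same argument, but Steinberg's theorem fails in general in the complex setting, and it is precisely this failure that produces the nontrivial content. The approach I would take is to analyze the intersection lattice $L(\CA(W))$ for $W=G(de,e,n)$ combinatorially in terms of partitions decorated by roots of unity, classify the $W$-orbits of flats $X$, and for each orbit either (a) recognize $\CA^X$ as a reflection arrangement of a smaller group in the same family, and invoke Terao again, or (b) when $\CA^X$ is not a reflection arrangement, exhibit freeness by constructing an explicit basis of the derivation module, or by building $\CA^X$ through a chain of addition-deletion steps from a known free arrangement using the Addition-Deletion Theorem or the Ziegler--Yoshinaga multi-arrangement criterion.

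For the exceptional groups, the natural strategy is a finite computer check: for each of $G_4,\ldots,G_{37}$, enumerate $W$-orbit representatives of the flats $X\in L(\CA(W))$, compute the restricted arrangements $\CA^X$ explicitly, and verify freeness of each by Saito's criterion or by checking that the characteristic polynomial factors over $\BBZ$ combined with a computation of $\pdeg$ of the derivation module. This is entirely mechanical in \GAP{} with the \CHEVIE{} package, but the quantity of data is substantial and must be organized carefully; the exponents produced should match the coexponent prediction where $\CA^X$ happens to be a reflection arrangement, giving a consistency check.

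The main obstacle I anticipate is the subfamily of $G(de,e,n)$ with $e>1$, where restrictions to certain flats yield arrangements that are neither reflection arrangements nor obviously obtainable by a short deletion chain from one. Here the combinatorics of the decorated partitions determines the multiplicities of hyperplanes in $\CA^X$, and one must show by hand that these multi-arrangements satisfy Ziegler's freeness criterion; packaging this uniformly across all orbit types, rather than collapsing into a long list of sub-cases, will be the delicate step, and I expect the treatment of the ``mixed'' flats (those whose stabilizer is a reducible subgroup of $G(de,e,n)$) to require the most care.
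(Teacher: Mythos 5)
Your overall architecture (reduce to the irreducible case, split by the Shephard--Todd classification, cite or redo the infinite family, and finish the exceptional groups by computer) is the same as the paper's, which disposes of all but four restrictions in $G_{33}$ and $G_{34}$ by citing Orlik--Terao and then verifies those four by explicit computation of $D(\CA^X)$ in \Singular. However, your treatment of the real case contains a genuine error. Steinberg's fixed point theorem identifies the \emph{localization} $\CA_X=\{H\in\CA\mid X\subseteq H\}$ with the reflection arrangement of the pointwise stabilizer $W_X$; it says nothing of the sort about the \emph{restriction} $\CA^X=\{X\cap H\mid H\in\CA\setminus\CA_X\}$, which lives in $X$, on which $W_X$ acts trivially. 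Restrictions of Coxeter arrangements are in general not reflection arrangements (e.g.\ the rank-$7$ restriction of $E_8$ to a hyperplane has $91$ hyperplanes and is not of type $E_7$), which is precisely why Orlik--Terao's ``Coxeter arrangements are hereditarily free'' is a nontrivial case-by-case theorem and why Douglass's uniform Lie-theoretic proof for Weyl groups was worth writing. So the Coxeter case cannot be dismissed as ``classical via Steinberg plus Terao''; it must be cited or reproved.

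Two further points on the exceptional groups. First, a factorization of the characteristic polynomial over $\BBZ$ is necessary but not sufficient for freeness (Orlik--Terao already verified the factorization for all restrictions in \cite[Prop.\ 6.89]{orlikterao:arrangements}, yet the freeness question remained open), so that cannot serve as a certificate; you must actually exhibit a basis and apply Saito's criterion, as you also suggest. Second, be aware that the Addition--Deletion route and the Barakat--Cuntz inductive algorithm both fail exactly in the hard cases: for the hyperplane restrictions in $G_{33}$ and $G_{34}$ the candidate exponents of $\CA^H$ are not contained in $\exp\CA$, and these arrangements turn out not to be inductively free (indeed the paper derives from its computations that $\CA\setminus\{H\}$ is not free, refuting \cite[Conj.\ 6.91]{orlikterao:arrangements}). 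So the only tool from your list that survives for the four outstanding cases is the direct computation of the derivation module, which is what the paper does.
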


Note that in general 
the restriction of a free arrangement need not be
free again (cf.~\cite[Ex.\ 4.141]{orlikterao:arrangements}).

Thanks to \cite[Prop.\ 4.28]{orlikterao:arrangements}, 
Theorem \ref{main}  reduces 
readily to the case when $\CA = \CA(W)$ is irreducible.

In  \cite[Prop.\ 6.73, Prop.\ 6.77, Cor.\ 6.86]{orlikterao:arrangements},
Orlik and Terao proved that each 
restricted arrangement $\CA^X$ is again free
provided that $W$ is a symmetric group or a monomial group $G(r,p,\ell)$.
The case when $W$ is a cyclic group is trivial.
This settles Theorem \ref{main} 
for each of the infinite series of complex reflection groups.

Furthermore, 
in a case-by-case study, 
Orlik and Terao showed in \cite{orlikterao:free}
that 
$\CA^X$ is free when $W$ is a Coxeter group.
In case $W$ is a Weyl group, Douglass \cite{douglass:adjoint}
gave a uniform proof
of this fact using an elegant conceptual Lie theoretic argument.

Moreover, for any hyperplane arrangement $\CA$, it is known that
$\CA^X$ is free 
in case $\dim X = 1$, 
\cite[Def.\ 4.7; Prop.\ 4.27]{orlikterao:arrangements}, 
as well as when
$\dim X = 2$ \cite[Ex.\ 4.20]{orlikterao:arrangements}.

For the exceptional complex reflection groups, 
Orlik and Terao checked that in each 
instance when $\dim X = 3$,  
the restriction $\CA^X$ is again free  
\cite[App.\ D]{orlikterao:arrangements}.

In this note, we settle the remaining instances in the 
exceptional groups.
There are only four instances when the freeness of $\CA^X$ still 
needs to be checked: Either $W = G_{33}$ and $X$ is a hyperplane in $V$
(there is only one class of hyperplanes and here $\dim X = 4$), or else
$W = G_{34}$ and $X$ is a hyperplane in $V$ 
(there is only one class of hyperplanes and here $\dim X = 5$), 
or $X \in L(\CA)$ belongs to one of two classes of $4$-dimensional 
subspaces in $V$.

Our proof of these remaining cases for Theorem \ref{main} is computational.
First we use the functionality for complex reflection groups 
provided by the   \CHEVIE\ package in   \GAP\ 
(and some \GAP\ code by J.~Michel)
(see \cite{gap3} and \cite{chevie})
in order to obtain explicit 
linear functionals $\alpha$ defining the hyperplanes 
$\ker \alpha$ of the underlying reflection arrangement
$\CA(W)$. 
These then allow us to implement the 
module of derivations $D(\alpha)$ associated with $\alpha$
in the   \Singular\ computer algebra system (cf.~\cite{singular}). 
Then the
module theoretic functionality of
  \Singular\ is used to show that the
modules of derivations in question $D(\CA^X)$ 
are free.

While our calculations 
(combined with the existing known instances of the 
conjecture of Orlik and Terao)  
do provide a proof of Theorem \ref{main}, it would  
nevertheless be very desirable to have a uniform,  conceptual
proof free of case-by-case considerations and free of 
computer calculations.
A  conceptual proof is only known  in case  
of Weyl groups \cite{douglass:adjoint}.

The notion of freeness was introduced by Saito in his 
seminal work \cite{saito}.
Questions of freeness play a central role in 
the understanding of arrangements (see 
\cite[\S 4]{orlikterao:arrangements}, \cite{terao:freeI}).
In current research they are still of key importance;
for instance in form of 
inductively free arrangements,
e.g., see \cite{cuntz:indfree} or  \cite{hogeroehrle:indfree},
or in the context of multiarrangements, 
e.g., see \cite{schulz:free}.

In the next section, we recall the required notation and  
facts about freeness of hyperplane arrangements and 
reflection arrangements from
\cite[\S 4, \S6]{orlikterao:arrangements}.
We settle the outstanding cases of Orlik and Terao's conjecture,
completing the proof of Theorem \ref{main} in Section 3.

For general information about arrangements and reflection groups, we refer
the reader to \cite{orlikterao:arrangements} and \cite{bourbaki:groupes}.

\section{Preliminaries}

\subsection{Hyperplane Arrangements}
\label{ssect:hyper}

Let $V = \BBC^\ell$ 
be an $\ell$-dimensional complex vector space. 
A \emph{hyperplane arrangement} is a pair
$(\CA, V)$, where $\CA$ is a finite collection of hyperplanes in $V$.
Frequently, we simply write $\CA$ in place of $(\CA, V)$.
The \emph{lattice of $\CA$}, $L(\CA)$, is the set of subspaces of $V$ of
the form $H_1\cap \dotsm \cap H_n$, where $\{ H_1, \ldots, H_n \}$ is a subset
of $\CA$. 

For $X \in L(\CA)$, we have two associated arrangements, 
firstly the subarrangement 
$\CA_X :=\{H \in \CA \mid X \subseteq H\}$
of $\CA$ and secondly, 
the \emph{restriction of $\CA$ to $X$}, $(\CA^X,X)$, where 
$\CA^X := \{ X \cap H \mid H \in \CA \setminus \CA_X\}$.

Let $S = S(V^*)$ be the symmetric algebra of the dual space $V^*$ of $V$.
If $x_1, \ldots , x_\ell$ is a basis of $V^*$, then we identify $S$ with 
the polynomial ring $\BBC[x_1, \ldots , x_\ell]$.
Letting $S_p$ denote the $\BBC$-subspace of $S$
consisting of the homogeneous polynomials of degree $p$ (along with $0$),
we see that
$S$ is naturally $\BBZ$-graded: $S = \bigoplus_{p \in \BBZ}S_p$, where
$S_p = 0$ in case $p < 0$.

Let $\Der(S)$ be the $S$-module of $\BBC$-derivations of $S$, where the
$S$-module structure is defined as follows:
For $f \in S$ and $\theta_1, \theta_2 \in \Der(S)$, let 
$f\theta_1, \theta_1 + \theta_2 \in \Der(S)$ be defined by 
$(f\theta_1)(g) := f \cdot \theta_1(g)$ and
$(\theta_1 + \theta_2)(g) = \theta_1(g) + \theta_2(g)$ for $g \in S$.
For $i = 1, \ldots, \ell$, 
let $D_i := \partial/\partial x_i$ be the usual derivation of $S$.
Then $D_1, \ldots, D_\ell$ is a $\BBC$-basis of $\Der(S)$.
We say that $\theta \in \Der(S)$ is 
\emph{homogeneous of polynomial degree p}
provided 
$\theta = \sum_{i=1}^\ell f_i D_i$, 
where $f_i \in S_p$ for each $1 \le i \le \ell$.
In this case we write $\pdeg \theta = p$.
Let $\Der(S)_p$ be the $\BBC$-subspace of $\Der(S)$ consisting 
of all homogeneous derivations of polynomial degree $p$.
Then $\Der(S)$ is a graded $S$-module:
$\Der(S) = \bigoplus_{p\in \BBZ} \Der(S)_p$.
For instance, the \emph{Euler derivation}
$\theta_E := \sum_{i=1}^\ell x_i D_i$ is homogeneous of 
polynomial degree $1$
(cf.~\cite[Def.~4.7]{orlikterao:arrangements}).

Following \cite[Def.~4.4]{orlikterao:arrangements}, 
for $f \in S$, we define the $S$-submodule $D(f)$ of $\Der(S)$ by
\[
D(f) := \{\theta \in \Der(S) \mid \theta(f) \in f S\} .
\]

Let $\CA$ be an arrangement in $V$. 
Then for $H \in \CA$ we fix $\alpha_H \in V^*$ with
$H = \ker(\alpha_H)$.
The \emph{defining polynomial} $Q(\CA)$ of $\CA$ is given by 
$Q(\CA) := \prod_{H \in \CA} \alpha_H \in S$.

The \emph{module of $\CA$-derivations} of $\CA$ is 
defined by 
\[
D(\CA) := D(Q(\CA)).
\]
Note that for any arrangement $\CA$ we have that $\theta_E \in D(\CA)$
(cf.~\cite[Def.~4.7]{orlikterao:arrangements}).
We say that $\CA$ is \emph{free} if the module of $\CA$-derivations
$D(\CA)$ is a free $S$-module.

With the $\BBZ$-grading of $\Der(S)$, the module of $\CA$-derivations
becomes a graded $S$-module $D(\CA) = \bigoplus_{p\in \BBZ} D(\CA)_p$,
where $D(\CA)_p = D(\CA) \cap \Der(S)_p$ 
\cite[Prop.\ 4.10]{orlikterao:arrangements}.
If $\CA$ is a free arrangement, then the $S$-module 
$D(\CA)$ admits a basis of $\ell$ homogeneous derivations, 
say $\theta_1, \ldots, \theta_\ell$ 
\cite[Prop.\ 4.18]{orlikterao:arrangements}.
While the $\theta_i$'s are not unique, their polynomial 
degrees $\pdeg \theta_i$ 
are unique (up to ordering). This multiset is the set of 
\emph{exponents} of the free arrangement $\CA$
and is denoted by $\exp \CA$.

An important theorem for free arrangements $\CA$  states that 
the Poincar\'e polynomial $\pi(\CA,t)$ of the lattice $L(\CA)$ 
(cf.\ \cite[\S 2.3]{orlikterao:arrangements})
factors into linear terms given by the exponents of $\CA$ as
\[
\pi(\CA,t) = \prod_{i=1}^\ell (1 + b_i t),
\]
where $\exp \CA = \{b_1, \ldots, b_\ell\}$ are the exponents of $\CA$
\cite[Thm.\ 4.137]{orlikterao:arrangements}.
This factorization property suggests
that freeness of $\CA$ only depends on the 
lattice $L(\CA)$; this is a basic conjecture due to Terao 
\cite[Conj.\ 4.138]{orlikterao:arrangements}.

Following \cite[Def.\ 4.140]{orlikterao:arrangements}, 
we say that $\CA$ is \emph{hereditarily free}
provided $\CA^X$ is free for every $X \in L(\CA)$.
In general, a free arrangement need not be hereditarily free,
thanks to a  counterexample due to Edelman and Reiner  
\cite[Ex.\ 4.141]{orlikterao:arrangements}.

\subsection{Reflection Arrangements}
\label{ssect:refl}

Suppose that $W \subseteq \GL(V)$ 
is a finite, complex reflection group acting on the complex
vector space $V=\BBC^\ell$.
The \emph{reflection arrangement} $\CA = \CA(W)$ of $W$ in $V$ is 
the hyperplane arrangement 
consisting of the reflecting hyperplanes of the elements in $W$
acting as reflections on $V$.

It is known that each reflection arrangement 
$\CA(W)$ is free \cite{terao:freeI}.
In \cite[Prop.\ 6.89]{orlikterao:arrangements}, 
Orlik and Terao proved in a case-by-case argument
that, for any $X \in L(\CA)$ with $\dim X = p$,
there  exist integers $b_1^X, \ldots, b_p^X$ 
such that the Poincar\'e polynomial 
of the restriction $\CA^X$ satisfies the factorization property
\[
\pi(\CA^X,t) = \prod_{i=1}^p (1 + b_i^X t).
\]
Moreover, in all instances when $\CA^X$ is known to be free, 
the equality 
$\exp \CA^X = \{b_1^X, \ldots, b_p^X\}$ holds.
In view of this fact and the aforementioned factorization 
theorem for free arrangements, 
\cite[Thm.\ 4.137]{orlikterao:arrangements}, 
Orlik and Terao conjecture that every 
reflection arrangement is hereditarily free
(cf.\ \cite[Conj.\ 6.90]{orlikterao:arrangements}).
Theorem \ref{main} settles this conjecture.

\section{Proof of Theorem \ref{main}}

As explained in the Introduction, 
all but four cases of Theorem \ref{main} have already been proved.
We thus concentrate on the four outstanding 
incidences in $G_{33}$ and $G_{34}$.

In the case of the restriction of $\CA$ to a hyperplane $H$,
one is inclined to use the Addition-Deletion Theorem
\cite[Thm.\ 4.51]{orlikterao:arrangements} 
in order to derive the freeness of $\CA^H$.
However, in the case of $G_{33}$
and $G_{34}$, 
the set $\{b_1^H, \ldots, b_p^H\}$
is not a subset of $\exp \CA(W)$ 
(cf.~\cite[Tables C.14, C.17]{orlikterao:arrangements}), 
and thus this criterion does not apply.

Recall that the defining polynomial 
$Q(\CA) = \prod_{H \in \CA} \alpha_H$ of $\CA$ is a product of linear factors.
Thanks to \cite[Prop.~4.8]{orlikterao:arrangements}, we have
\begin{equation}
\label{eq}
  D(\CA) = \bigcap_{H \in \CA} D(\alpha_H).
\end{equation} 
Therefore, we may compute $D(\CA)$ 
as an intersection of the $S$-modules $D(\alpha_H)$.

The linear factors can be obtained from the 
  \CHEVIE\  package in   \GAP\  (see \cite{gap3} and \cite{chevie})
in the following way:
\begin{itemize}
\item \verb|G:=ComplexReflectionGroup(33);| \\
returns the complex reflection group with Shephard-Todd number $33$:  $G = G_{33}$.
\item \verb|R:=Reflections(G);| \\
returns the list $R$ of reflections of $G$ (some of which occur more than once).
\item \verb|A:=MatXPerm(G,g);| \\
returns the representation matrix of $g \in R$ on $V$.
\item \verb|BaseMat(A-IdentityMat(l))[1];| \\
returns the linear form $\alpha$ whose kernel 
      is the corresponding hyperplane.
\end{itemize}

Let $e_1, \ldots, e_\ell \in V$ be the dual basis of 
$x_1, \ldots, x_\ell \in S$.
We define a map ${}^\vee : V \to \Der(S)_0$ by
$v = \sum_{i = 1}^\ell \lambda_i e_i \mapsto 
v^\vee := \sum_{i = 1}^\ell \lambda_i D_i \in \Der(S)_0$.

Using this notation, 
a set of generators of $D(\alpha)$ is given by a 
basis of the corresponding hyperplane along with the 
Euler derivation $\theta_E$ as follows:

\begin{lemma}
\label{lem}
  Let $\{v_1,\ldots,v_{\ell-1}\}$ be a $\BBC$-basis 
of $\ker \alpha$ for $\alpha \in  V^*\setminus\{0\}$.
Then 
$\{\theta_E,v_1^\vee,\ldots,v_{\ell-1}^\vee \}$ is an $S$-basis of $D(\alpha)$.

In particular, we have $D(\CA)_0 = \bigcap_{H \in \CA} H$.
\end{lemma}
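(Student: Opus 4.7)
\textbf{Proof plan for Lemma \ref{lem}.}
The plan is to verify the claimed basis of $D(\alpha)$ in three steps—containment, $S$-linear independence, and generation—and then deduce the parenthetical equality by extracting the degree-$0$ part of \eqref{eq}.

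First I would check that each listed derivation lies in $D(\alpha)$. Since $\alpha$ is homogeneous of degree $1$, Euler's identity gives $\theta_E(\alpha)=\alpha\in\alpha S$. Writing $\alpha=\sum_i a_ix_i$ and $v_j=\sum_i\lambda_{ji}e_i$, a direct computation yields $v_j^\vee(\alpha)=\sum_i\lambda_{ji}a_i=\alpha(v_j)=0$, so $v_j^\vee\in D(\alpha)$.

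Next, for $S$-linear independence, suppose $f\theta_E+\sum_j g_jv_j^\vee=0$. Evaluating at $\alpha$ annihilates every $v_j^\vee$ and leaves $f\alpha=0$, hence $f=0$. The remaining relation $\sum_jg_jv_j^\vee=0$ expands, via the $S$-basis $D_1,\dots,D_\ell$ of $\Der(S)$, into the linear system $\sum_j\lambda_{ji}g_j=0$ for $i=1,\dots,\ell$. The coefficient matrix has $\BBC$-rank $\ell-1$ since the $v_j$ are $\BBC$-independent; because its entries are scalars, it has the same rank over $S$, forcing each $g_j=0$.

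For generation, given $\theta\in D(\alpha)$, write $\theta(\alpha)=h\alpha$ with $h\in S$ and set $\theta':=\theta-h\theta_E$, so $\theta'(\alpha)=0$. It remains to show that $\{v_1^\vee,\dots,v_{\ell-1}^\vee\}$ generates the kernel $K$ of the $S$-linear evaluation map $\varepsilon:\Der(S)\to S$, $\theta\mapsto\theta(\alpha)$. After a $\BBC$-linear change of coordinates we may take $\alpha=x_1$ and $v_j=e_{j+1}$; then $\varepsilon(\sum_if_iD_i)=f_1$, so $K=\bigoplus_{i\ge 2}SD_i=\bigoplus_{j=1}^{\ell-1}Sv_j^\vee$, as required. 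Equivalently, $K$ is the Koszul syzygy module on the single nonzero linear form $\alpha$, which is well known to be free of rank $\ell-1$.

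For the final assertion, take the degree-$0$ part of \eqref{eq}: $D(\CA)_0=\bigcap_{H\in\CA}D(\alpha_H)_0$. Since $\pdeg\theta_E=1$, the basis just constructed shows that $D(\alpha_H)_0$ consists precisely of the derivations $v^\vee$ with $v\in H=\ker\alpha_H$. Identifying $\Der(S)_0$ with $V$ via the $\BBC$-linear isomorphism ${}^\vee$ yields $D(\CA)_0=\bigcap_{H\in\CA}H$. The argument is routine; the only substantive point is the freeness of $K$, and the coordinate-change trick above makes this essentially immediate.
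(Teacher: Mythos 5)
Your proof is correct, but it takes a genuinely different route from the paper. The paper verifies containment of the $\ell$ candidate derivations in $D(\alpha)$ and then invokes Saito's criterion: it forms the coefficient matrix $M$ of $\theta_E, v_1^\vee,\ldots,v_{\ell-1}^\vee$, notes that $\det M \in \alpha S$ by \cite[Prop.~4.12]{orlikterao:arrangements}, shows $\det M \neq 0$ by Laplace expansion along the $\theta_E$-column (using the linear independence of the $v_i$), and concludes $\det M = \lambda\alpha$ with $\lambda \neq 0$, so that \cite[Thm.~4.19]{orlikterao:arrangements} applies. You instead prove the three assertions (containment, $S$-independence, generation) by hand, with the only substantive step being that $\ker\bigl(\theta\mapsto\theta(\alpha)\bigr)$ is freely generated by the $v_j^\vee$, which you reduce to the transparent case $\alpha = x_1$ by a linear change of coordinates (note that after the change of basis the given $v_j$ need not literally equal $e_{j+1}$, but they span the same $S$-module, so this is harmless). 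Your argument is more elementary and self-contained; the paper's is shorter given that Saito's criterion is already part of its standing toolkit, and it fits the computational theme of the paper, where determinants of coefficient matrices are the natural certificates of freeness. Your derivation of the degree-$0$ statement from the basis matches the intended reading of the paper's "in particular."
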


\begin{proof}
Note that 
we have $v_i^\vee(\alpha) = \alpha(v_i) = 0$, and so $v_i^\vee \in D(\alpha)$, 
for each $i = 1, \ldots , \ell-1$.
Let $M := M(\theta_E, v_1^\vee,\ldots,v_{\ell-1}^\vee)$ be the 
coefficient matrix associated with 
$\theta_E, v_1^\vee,\ldots,v_{\ell-1}^\vee \in D(\alpha)$,
i.e., the entries of $M$ are the coefficients of 
$\theta_E,v_1^\vee,\ldots,v_{\ell-1}^\vee$ in terms of the $D_i$'s 
(cf.\ \cite[Def.~4.11]{orlikterao:arrangements}).
Thanks to \cite[Prop.~4.12]{orlikterao:arrangements},
we have $\det M \in \alpha S$.
Therefore, once we know that $\det M\not= 0$,
we get that $\deg (\det M) = 1$, and thus 
$\det M = \lambda \alpha$ for some $\lambda \in \BBC\setminus\{0\}$.

Computing $\det M$ by Laplace along the column given by $\theta_E$, it
is obvious that the determinant does not vanish, since 
$\{v_1,\ldots,v_{\ell-1}\}$ is linearly independent and therefore at least one 
$(\ell-1)$-minor of the coefficient-matrix of the $v_i^\vee$'s is non-zero.
Consequently, $\det M = \lambda \alpha$ for some 
$\lambda \in \BBC\setminus\{0\}$.
The result now follows from Saito's criterion  
\cite[Thm.~4.19]{orlikterao:arrangements}.
\end{proof}

In order to compute $D(\CA^H)$, we 
require the defining polynomial $Q(\CA^H)$ along with its linear
factors. The strategy is to insert the equation 
defining the hyperplane $H$ 
into the remaining factors of $Q(\CA)$. Some of the resulting new 
factors then coincide modulo a scalar, and consequently, 
the corresponding new hyperplanes are 
the same. For each hyperplane, we choose only one such factor for the defining 
polynomial of $\CA^H$. 

Using the explicit data provided by \CHEVIE\  and Lemma \ref{lem}, 
we can calculate $D(\CA^H)$ using equation \eqref{eq}
along with 
the \verb|intersect| command in   \Singular\ (cf.\ \cite{singular}).

The derivations that form a basis of $D(\CA^X)$ that we have calculated 
using \Singular\ in the four cases above
are  expressions with long and complicated polynomial coefficients, 
with the exception of the Euler derivation, of course. 
So they are simply too cumbersome 
and not particularly enlightening in order to be listed explicitly. 
The interested reader may find them  
via the link   
\url{http://www.ruhr-uni-bochum.de/ffm/Lehrstuehle/Lehrstuhl-VI/hyperplane_arrangements.html}.

The following algorithmic method to show that a given arrangement
$\CA$ is free is proposed by Barakat and Cuntz 
\cite[\S 6.3]{cuntz:indfree}.
Start with the empty arrangement and successively add hyperplanes. 
At each step check if the module of derivations given by the corresponding 
intersections is free.
This algorithm only works if $\CA$ is \emph{inductively free}
\cite[Def.\ 4.53]{orlikterao:arrangements}. In that case there is 
an ordering of the hyperplanes 
$H_1, H_2, \ldots$
such that, 
for each subarrangement $\CA_i:=\{H_1, \ldots, H_i\}$ of $\CA$, 
the corresponding module
of derivations $D(\CA_i)$ is free. 
Unfortunately, neither $\CA(G_{33})$ nor $\CA(G_{34})$ is  
inductively free; see the next paragraph. 
So it is not possible to employ the algorithm from 
\cite{cuntz:indfree} in our case.

We now show that $\CA = \CA(G_{33})$ is not inductively free;
for $G_{34}$ the argument is similar.
Our argument depends on the result of our computation that 
the restricted arrangement $\CA^H$ is free for $H \in \CA$. 
For, since $\CA$ and $\CA^H$ are free, it follows from 
\cite[Prop.~4.57]{orlikterao:arrangements} that the map 
$q\colon D(\CA) \rightarrow D(\CA^H)$ 
(as defined in \cite[Prop.\ 4.45]{orlikterao:arrangements}) 
is surjective if and only if 
the subarrangement $\CA\setminus\{H\}$  of $\CA$ is free. 
Since the map $q$ is ``degree preserving'' 
(\cite[Prop.\ 4.44]{orlikterao:arrangements}), 
it follows from the list of 
the polynomial
degrees of the generators of the free modules
$D(\CA)$ and $D(\CA^H)$ (i.e., the exponents
of $\CA$ and $\CA^H$) given 
in \cite[Table C.14]{orlikterao:arrangements} that $q$ is not onto. 
This in particular shows that $\CA\setminus\{H\}$ is not free and so 
$\CA$ is not inductively free.
This in particular  shows  that Orlik and Terao's conjecture
that every reflection arrangement is inductively free
is false,  \cite[Conj.\ 6.91]{orlikterao:arrangements}.
However, only recently, Barakat and Cuntz showed in \cite{cuntz:indfree}
that every Coxeter arrangement is 
inductively free.
In the forthcoming paper \cite{hogeroehrle:indfree},
we classify all inductively free reflection arrangements.

As indicated above, 
the computations to 
settle the four open cases in order to complete the proof of 
Theorem \ref{main} were done using  \Singular.
They were carried out on a 4 x Intel Quad Core Xeon E7340 / 2,4 GHz with 
128 GB RAM. The three cases where one has to restrict to a subspace of
dimension 4 were computed in less than 2 minutes each. 
The most elaborate case was to 
calculate
$D(\CA(G_{34})^H)$. This was computed in 2 days. 
However, the calculation here might 
take longer,  since   \Singular\  uses some random choices in its 
use of Gr\"obner bases constructions.


\medskip 
{\bf Acknowledgments}: 
We 
acknowledge 
support from the DFG-priority program 
SPP1489 ``Algorithmic and Experimental Methods in
Algebra, Geometry, and Number Theory''.


\bigskip

\bibliographystyle{amsalpha}

\newcommand{\etalchar}[1]{$^{#1}$}
\providecommand{\bysame}{\leavevmode\hbox to3em{\hrulefill}\thinspace}
\providecommand{\MR}{\relax\ifhmode\unskip\space\fi MR }
\providecommand{\MRhref}[2]{%
  \href{http://www.ams.org/mathscinet-getitem?mr=#1}{#2} }
\providecommand{\href}[2]{#2}


\end{document}